\newtheorem{theorem}{Theorem}[section]
\newtheorem{conjecture}[theorem]{Conjecture}
\newtheorem{lemma}[theorem]{Lemma}
\newtheorem{definition}[theorem]{Definition}
\theoremstyle{definition}
\newtheorem{example}[theorem]{Example}
\newtheorem{examples}[theorem]{Examples}
\newtheorem{remark}[theorem]{Remark}
\numberwithin{equation}{section}
\renewcommand{\Re}{\mathrm{Re}}
\renewcommand{\epsilon}{\varepsilon}
\patchcmd{\section}{\scshape}{\bfseries}{}{}
\renewcommand{\@secnumfont}{\bfseries}
\makeatletter\newcommand{\tpmod}[1]{{\@displayfalse \pmod{#1}}}
\begin{document}

\title{Towards the Deep Riemann Hypothesis for $\mathrm{GL}_{n}$}

\author{Ikuya Kaneko}
\address{The Division of Physics, Mathematics and Astronomy, California Institute of Technology, 1200 E. California Blvd., Pasadena, CA 91125, USA}
\email{ikuyak@icloud.com}
\urladdr{\href{https://sites.google.com/view/ikuyakaneko/}{https://sites.google.com/view/ikuyakaneko/}}

\author{Shin-ya Koyama}
\address{Department of Biomedical Engineering, Toyo University, 2100 Kujirai, Kawagoe, Saitama, 350-8585, Japan}
\email{koyama@tmtv.ne.jp}
\urladdr{\href{http://www1.tmtv.ne.jp/~koyama/}{http://www1.tmtv.ne.jp/~koyama/}}

\author{Nobushige Kurokawa}
\address{Department of Mathematics, Tokyo Institute of Technology, Oh-okayama, Meguro-ku, Tokyo, 152-8551, Japan}
\email{kurokawa@math.titech.ac.jp}

\thanks{IK acknowledges the support of the Masason Foundation.}

\date{\today}

\subjclass[2020]{11M06, 11M26 (primary); 11M38 (secondary)}

\keywords{Deep Riemann hypothesis, grand Riemann hypothesis, Birch--Swinnerton-Dyer conjecture, Chebyshev bias}

\begin{abstract}
We explicate the deep Riemann hypothesis for the general linear group $\mathrm{GL}_{n}$~on the convergence of normalised Euler products of standard $L$-functions on the critical line.~It conditionally improves upon the error term in the prime number theorem beyond what the grand Riemann hypothesis predicts. Furthermore, we discuss the Chebyshev bias for Satake parameters on $\mathrm{GL}_{n}$ from the perspective of the deep Riemann hypothesis.
\end{abstract}

\maketitle
\tableofcontents

\section{Introduction}
Riemann at 33 years of age communicated his findings to the Monatsberichte der Berliner Akademie on 19 October 1859, and Kummer read the paper at the meeting of the academy on 3 November 1859. The specific date it appeared in print, which should be the official~date of the paper, is not known. The Riemann hypothesis (RH) was presented in his paper~\cite{Riemann1859}, and it remains one of the most difficult unsolved problems in mathematics. RH is a problem of the real part of nontrivial zeros of the Riemann zeta function. The third author suggested in his Japanese books~\cite{Kurokawa2012,Kurokawa2013} the deep Riemann hypothesis (DRH) pertaining to the convergence of Euler products on the critical line, which serves as a prediction exceeding RH. If RH is likened to the mountain climbing, then it falls upon the fifth station, and studies of zeta functions mean to climb Mount Zeta.

Riemann~\cite{Riemann1859} attempted to prove the prime number theorem that states as $x \to \infty$,
\begin{equation}\label{Prime-Number-Theorem}
\pi(x) \coloneqq \#\{p: p \leq x \} \sim \frac{x}{\log x},
\end{equation}
where $\pi(x)$ counts the number of primes up to $x$. This was proposed by Legendre and Gau{\ss} around the end of the 18th century. Riemann introduced complex analysis to delve into~$\pi(x)$. The full proof was provided half a century later independently by Hadamard~\cite{Hadamard1893,Hadamard1896} and de la Vall\'{e}e Poussin~\cite{delaValleePoussin1896}. We direct the reader to~\cite{Zagier1997} for an elementary proof.

The Riemann zeta function $\zeta(s)$ in the real variable $s$ was introduced by Euler in his tome \textit{Introductio in Analysin Infinitorum}. He studied the values of $\zeta(s)$ at even integers with the divination of a functional equation. Riemann~\cite{Riemann1859} obtained the functional equation with a deeper analysis of $\pi(x)$ by considering the zeta function in the complex variable. We define
\begin{equation*}
\zeta(s) \coloneqq \sum_{n = 1}^{\infty} \frac{1}{n^{s}} = \prod_{p} \left(1-\frac{1}{p^{s}} \right)^{-1}, \qquad \Re(s) > 1,
\end{equation*}
where the product runs through primes. He then established the functional equation
\begin{equation*}
\Lambda(s) \coloneqq \pi^{-\frac{s}{2}} \Gamma \left(\frac{s}{2} \right) \zeta(s) = \Lambda(1-s).
\end{equation*}
The paper of Riemann~\cite{Riemann1859} is a r\'{e}sum\'{e} of his extensive work on $\zeta(s)$. Perhaps due to his plan to publicise the main paper subsequently, he veiled all his computations and condensed an enormous message into the following conjecture.

\medskip

\noindent \textbf{Riemann hypothesis (RH):} All zeros of $\Lambda(s)$ lie on the critical line $\Re(s) = \frac{1}{2}$.

\medskip

At first sight, RH is solely an interesting property of $\zeta(s)$, and Riemann himself appears~to take that view. Nonetheless, one should not draw from his thoughts the conclusion that RH is a casual remark of inferior interest. Quintessential examples implying the influence of RH include the result of von Koch~\cite{vonKoch1901}, which states that RH is equivalent to an asymptotic
\begin{equation}\label{von-Koch}
\pi(x) = \mathrm{Li}(x)+O(\sqrt{x} \log x),
\end{equation}
where
\begin{equation*}
\mathrm{Li}(x) \coloneqq \int_{0}^{x} \frac{dt}{\log t}.
\end{equation*}
The estimate~\eqref{von-Koch} is stronger than what the prime number theorem~\eqref{Prime-Number-Theorem} asserts.

Generalisations of RH to broader families of $L$-functions exhibit substantial consequences. We are now in a position to explicate certain extensions of RH leading to the grand Riemann hypothesis. For a primitive Dirichlet character $\chi$ modulo $q$, we define Dirichlet $L$-functions~by
\begin{equation*}
L(s, \chi) \coloneqq \sum_{n = 1}^{\infty} \frac{\chi(n)}{n^{s}} = \prod_{p} \left(1-\frac{\chi(p)}{p^{s}} \right)^{-1}, 
\qquad \Re(s) > 1.
\end{equation*}
It extends to a meromorphic function on $\mathbb{C}$, and obeys the functional equation
\begin{equation*}
\Lambda(s, \chi) \coloneqq \pi^{-\frac{s+\kappa(\chi)}{2}} \Gamma \left(\frac{s+\kappa(\chi)}{2} \right) L(s, \chi)
 = \epsilon(\chi) q^{\frac{1}{2}-s} \Lambda(1-s, \overline{\chi}),
\end{equation*}
where $\kappa(\chi) = \frac{1-\chi(-1)}{2}$ and $\epsilon(\chi)$ denotes the sign of the Gau{\ss} sum $\tau(\chi)$. They give rise to all the degree one $L$-functions. In terms of an ad\`{e}lic language, one can view $n \mapsto \chi(n) n^{-it}$ as a general multiplicative character on the ad\`{e}les; hence the imaginary coordinate $t$ and the Dirichlet character $\chi$ are the archimedean and non-archimedean components respectively of a single ad\`{e}lic frequency parameter. This viewpoint is introduced in Tate's thesis~\cite{Tate1950}.

Families of degree two $L$-functions involve Ramanujan's $\tau$-function given by
\begin{equation*}
\Delta(z) \coloneqq e^{2\pi iz} \prod_{n = 1}^{\infty} (1-e^{2\pi inz})^{24} = \sum_{n = 1}^{\infty} \tau(n) e^{2\pi inz}, 
\qquad z \in \mathbb{H},
\end{equation*}
where $\mathbb{H} \coloneqq \{z = x+iy \in \mathbb{C}: y > 0 \}$ is the upper half-plane upon which the modular~group $\mathrm{SL}_{2}(\mathbb{Z})$ acts via M\"{o}bius transformations. The function $\Delta(z)$ is a holomorphic cusp form of weight $12$ for $\mathrm{SL}_{2}(\mathbb{Z})$. The corresponding $L$-function is defined by
\begin{equation*}
L(s, \Delta) \coloneqq \sum_{n = 1}^{\infty} \frac{\tau(n)}{n^{s}}
 = \prod_{p} \left(1-\frac{\tau(p)}{p^{s}}-\frac{1}{p^{2s-11}} \right)^{-1}, \qquad \Re(s) > \frac{13}{2}.
\end{equation*}
It extends to a meromorphic function on $\mathbb{C}$, and obeys the functional equation
\begin{equation*}
\Lambda(s, \Delta) \coloneqq (2\pi)^{-s} \Gamma(s) L(s, \Delta) = \Lambda(12-s, \Delta).
\end{equation*}
We may normalise this $L$-function so that its critical line is $\Re(s) = \frac{1}{2}$.

To introduce Artin $L$-functions, let $K$ be a global field, and let $\rho: \mathrm{Gal}(\overline{K}/K) \rightarrow \mathrm{GL}_{n}(\mathbb{C})$~be an irreducible $n$-dimensional complex representation of the Galois group, where $\overline{K}$ is a fixed algebraic closure of $K$. As a precursor, Artin~\cite{Artin1924} defined $L$-functions $L(s, \rho)$ as a product over prime ideals of $K$ of local factors that at unramified primes have the form
\begin{equation*}
\det(1-\rho(\mathrm{Frob}_{v}) \mathrm{N}(v)^{-s})^{-1},
\end{equation*}
where $\mathrm{Frob}_{v}$ denotes the Frobenius conjugacy class acting on the invariants under the inertia group at $v$. The completed $L$-function $\Lambda(s, \rho)$ involving gamma factors (canonically attached to $\rho$) from~the archimedean places is known to be meromorphic and to satisfy the functional equation (see~\cite{Artin1930,Brauer1947})
\begin{equation*}
\Lambda(s, \rho) = \epsilon(\rho) c(\rho)^{\frac{1}{2}-s} \Lambda(1-s, \overline{\rho}),
\end{equation*}
where $\overline{\rho}$ is the contragredient representation of $\rho$, $c(\rho)$ is the conductor, and $\epsilon(\rho)$ is a complex number of modulus 1, called the root number. It is conjectured that $\Lambda(s, \rho)$ is entire (Artin holomorphy) unless $\rho$ contains the trivial component $\mathbbm{1}$, in which case it has a pole at $s = 1$. A field $K$ of positive characteristic $p$ is an algebraic function field $K/\mathbb{F}_{q}(t)$~for $q = p^{k}$.~Over such $K$, there exists a bijection between $n$-dimensional $\ell$-adic representations~$\rho$ and automorphic representations $\pi$ of $\mathrm{GL}_{n}(\mathbb{A}_{K})$ via the work of Lafforgue~\cite{Lafforgue2002}:
\begin{equation*}
L(s, \rho) = L(s, \pi).
\end{equation*}

\medskip

\noindent \textbf{Grand Riemann hypothesis (GRH):} All zeros of $\Lambda(s, \rho)$ lie on the critical line $\Re(s) = \frac{1}{2}$.

\medskip

Mertens~\cite{Mertens1874} considered the partial Euler products of $\zeta(s)$ and the Dirichlet $L$-function $L(s, \chi_{-4})$ at $s = 1$, where $\chi_{-4}$ denotes the primitive character modulo $4$. Ramanujan~\cite{Ramanujan1997} extended, beyond the boundary, the work of Mertens~\cite{Mertens1874} to $s > 0$. His result then~states conditionally on RH that
\begin{equation}\label{Ramanujan}
\prod_{p \leq x} \left(1-\frac{1}{p^{s}} \right)^{-1}
 = -\zeta(s) \exp \left(\mathrm{Li}(\vartheta(x)^{1-s})+\frac{2s x^{\frac{1}{2}-s}}{(2s-1) \log x}
 + \frac{S_{s}(x)}{\log x}+O \left(\frac{x^{\frac{1}{2}-s}}{(\log x)^{2}} \right) \right),
\end{equation}
where $\frac{1}{2} < s < 1$ is real, $\vartheta(x) = \sum_{p \leq x} \log p$, and
\begin{equation*}
S_{s}(x) = -s \sum_{\Lambda(\rho) = 0} \frac{x^{\rho-s}}{\rho(\rho-s)}.
\end{equation*}
The asymptotic~\eqref{Ramanujan} is contained in his seminal work on Euler products in the~critical strip. Unfortunately, his paper had not been open to the public until it was published in~\cite{Ramanujan1997}, since his original manuscript was partially published as~\cite{Ramanujan1915} due to the confusion during the World War I. This may be why the study of Euler products in the critical strip was~barely developed in the last century. Prior salient breakthroughs on the subject, however, include the study of the Birch--Swinnerton-Dyer conjecture, which will be detailed in Section~\ref{Birch-Swinnerton-Dyer-conjecture}.

DRH is a conjecture that the normalised partial Euler products converge on the critical~line $\Re(s) = \frac{1}{2}$; see Conjecture~\ref{DRH} for a rigorous formulation. As an application, it is known that DRH for the Riemann zeta function $\zeta(s)$ is equivalent to an asymptotic
\begin{equation*}
\vartheta(x) = x+o(\sqrt{x} \log x),
\end{equation*}
which is due to Akatsuka~\cite{Akatsuka2017}. Note that~\eqref{von-Koch} implies by partial summation that~$\vartheta(x) = x+O(\sqrt{x}(\log x)^{2})$. DRH is proven over function~fields~for arithmetic $L$-functions in~\cite{KimuraKoyamaKurokawa2014} and for Selberg zeta~functions associated to congruence subgroups~$\Gamma_{0}(A) \subset \mathrm{PGL}_{2}(\mathbb{F}_{p}[T])$ with $A \in \mathbb{F}_{p}[T]$ in~\cite{KoyamaSuzuki2014}. Furthermore, the first two authors~\cite{KanekoKoyama2022} studied the convergence of Euler products of Selberg zeta functions in the critical strip, and its relation to the error term in the prime geodesic theorem, a geometric counterpart of the prime number theorem~\eqref{Prime-Number-Theorem}.

\subsection*{Acknowledgements}
Part of this work is inspired by the report ``Deep Riemann hypothesis for $\mathrm{GL}_{n}$'' due to an exchange student at Tokyo Institute of Technology in 2012.

\section{Notation and Terminology}
We use a definition of analytic $L$-functions due to Iwaniec--Kowalski~\cite[Chapter~5]{IwaniecKowalski2004}.~We borrow the notation from~\cite{Devin2020} and work over $\mathbb{Q}$ to keep the subsequent discussions cleaner.
\begin{definition}\label{definition}
Let $L(s, f)$ be a complex-valued function of $s \in \mathbb{C}$. One attaches to $f$ the arithmetic conductor $q(f)$. An analytic $L$-function $L(s, f)$ is defined to satisfy the following.
\begin{enumerate}
\item[(i)] A Dirichlet series factorises as an Euler product of degree $d$ for $\Re(s) > 1$:
\begin{equation*}
L(s, f) \coloneqq \sum_{n = 1}^{\infty} \frac{\lambda_{f}(n)}{n^{s}}
 = \prod_{p} \prod_{j = 1}^{n} \left(1-\frac{\alpha_{j, f}(p)}{p^{s}} \right)^{-1},
\end{equation*}
where $\lambda_{f}(1) = 1$ and $\alpha_{j, f}(p) \in \mathbb{C}$ such that $|\alpha_{j, f}(p)| \leq 1$ for all $1 \leq j \leq n$ and $p \nmid q(f)$. The Dirichlet series and Euler product above are absolutely convergent for~$\Re(s) > 1$;
\item[(ii)] A gamma factor with Langlands parameters $\mu_{\pi}(j) \in \mathbb{C}$, $\Re(\mu_{\pi}(j)) > -1$ is defined by
\begin{equation*}
\gamma(s, f) \coloneqq \pi^{-\frac{ds}{2}} \prod_{j = 1}^{n} \Gamma \left(\frac{s+\mu_{\pi}(j)}{2} \right),
\end{equation*}
the analytic conductor of $f$ is defined by
\begin{equation*}
\mathfrak{q}(f) \coloneqq q(f) \prod_{j = 1}^{n} (|\mu_{\pi}(j)|+e),
\end{equation*}
and the completed $L$-function is defined by
\begin{equation*}
\Lambda(s, f) \coloneqq q(f)^{\frac{s}{2}} \gamma(s, f) L(s, f).
\end{equation*}
It admits an analytic continuation to a meromorphic function of order $1$ with at most poles~at $s = 0$ and $s = 1$. It satisfies the functional equation~$\Lambda(s, f) = \epsilon(f) \Lambda(1-s, \overline{f})$ with $|\epsilon(f)| = 1$, where $\Lambda(s, \overline{f})$ denotes the completed $L$-function associated to $L(s, \overline{f})$;
\item[(iii)] The second moment $L$-function is defined for $\Re(s) > 1$ by
\begin{equation*}
L_{2}(s, f) \coloneqq \prod_{p} \prod_{j = 1}^{n} \left(1-\frac{\alpha_{j, f}(p)^{2}}{p^{s}} \right)^{-1}.
\end{equation*}
It extends to a holomorphic nonvanishing function on $\Re(s) = 1$ except for a zero or pole at possibly one point on $\Re(s) = 1$.
\end{enumerate}
\end{definition}

\begin{examples}\mbox{}
\begin{enumerate}
\item The Riemann zeta function $\zeta(s)$ obeys all the conditions in Definition~\ref{definition}.
\item Dirichlet $L$-functions $L(s, \chi)$ obey all the conditions in Definition~\ref{definition}.
\item All degree $2$ $L$-functions are analytic by~\cite{Deligne1974,DeligneSerre1974}. In particular, it is known~that if $E/\mathbb{Q}$ is an elliptic curve, then $L(s, E)$ is analytic by~\cite{BreuilConradDiamondTaylor2001,TaylorWiles1995,Wiles1995}.
\item If we assume $|\alpha_{j, f}(p)| \leq 1$, then standard $L$-functions attached to Maa{\ss} forms on $\mathrm{GL}_{n}$ are analytic. The condition (ii) is known by~\cite{Cogdell2004,GodementJacquet1972}. Furthermore, the second moment $L$-function is the ratio of the symmetric and exterior square $L$-functions:
\begin{equation*}
L_{2}(s, f) = L(s, \mathrm{sym}^{2} f) L(s, \wedge^{2} f)^{-1} = L(s, f \otimes f) L(s, \wedge^{2} f)^{-2}.
\end{equation*}
By~\cite[Theorems 6.1 \& 7.5]{BumpGinzburg1992} for $L(s, \mathrm{sym}^{2} f)$ and~\cite[Theorems 1--3]{BumpFriedberg1990} and~\cite[Theorems 1--2]{JacquetShalika1990} for $L(s, \wedge^{2} f)$, the condition (iii) is justified.
\end{enumerate}
\end{examples}

We allow some $\alpha_{j, f}(v)$ to equal $0$. We usually expect $|\alpha_{j, f}(p)| = 1$ for all but finitely~many~$p$, but it is not assumed here. The parameters $\alpha_{j, f}(p)$ are determined up to permutations of~the indices. If $|\alpha_{j, f}(p)| = 1$ for all unramified primes and $|\alpha_{j, f}(p)| \leq 1$ otherwise, then $L(s, f)$ is said to satisfy the Ramanujan conjecture, yielding $|\lambda_{f}(\ell)| \leq \tau_{n}(\ell)$, where $\tau_{n}(\ell)$ denotes the $d$-fold divisor function. If $\Re(\mu_{\pi}(j)) \geq 0$, then $L(s, f)$ is said to satisfy the Selberg conjecture or the Ramanujan conjecture at the archimedean place.

\section{Deep Riemann Hypothesis for \texorpdfstring{$\mathrm{GL}_{1}$}{}}
We begin with the formulation of DRH for degree one $L$-functions.

\begin{conjecture}[Deep Riemann hypothesis for Dirichlet $L$-functions]\label{conj:DRH-chi}
If $\chi \ne \mathbbm{1}$, $\Re(s) = \frac{1}{2}$, and $m$ is the order of vanishing of $L(s, \chi)$, then
\begin{equation}\label{Dirichlet-L}
\lim_{x \to \infty} \left((\log x)^{m} \prod_{p \leq x} \left(1-\frac{\chi(p)}{p^{s}} \right)^{-1} \right)
 = \frac{L^{(m)}(s,\chi)}{e^{m\gamma}m!} \times
\begin{cases}
	\sqrt{2} & \text{if $s = \frac{1}{2}$ and $\chi^{2} = \mathbbm{1}$},\\
	1 & \text{otherwise}.
\end{cases}
\end{equation}
\end{conjecture}

This implies the convergence of partial Euler products on the critical line $\Re(s) = \frac{1}{2}$ except at zeros of $L(s, \chi)$. Conrad~\cite{Conrad2005} proved that the following statements are equivalent: the limit on the left-hand side of~\eqref{Dirichlet-L} exists for \textit{some} $s$ on $\Re(s) = \frac{1}{2}$, and it exists for \textit{every} $s$ on $\Re(s) = \frac{1}{2}$. Furthermore, the conjecture~\eqref{Dirichlet-L} is equivalent to the estimate
\begin{equation*}
\vartheta(x, \chi) \coloneqq \sum_{p \leq x} \chi(p) \log p = o(\sqrt{x} \log x).
\end{equation*}
Numerical evidence of DRH can be found in~\cite{KimuraKoyamaKurokawa2014}.

Let $E(x,\chi)$ denote the error term in the prime number theorem in arithmetic progressions:
\begin{equation*}
E(x, \chi) \coloneqq \psi(x, \chi)-
	\begin{cases}
	0 & \text{if $\chi \ne \mathbbm{1}$},\\
	x &\text{if $\chi = \mathbbm{1}$},
	\end{cases}
\end{equation*}
where
\begin{equation*}
\psi(x, \chi) \coloneqq \sum \limits_{n \leq x} \chi(n) \Lambda(n)
\end{equation*}
with the von Mangoldt function given by
\begin{equation*}
\Lambda(n) \coloneqq 
	\begin{cases}
	\log p & \text{if $n = p^{k}$},\\
	0 & \text{otherwise}.
	\end{cases}
\end{equation*}
The more the region of the convergence of the partial Euler product of $L(s, \chi)$ is extended, the~better bound for $E(x, \chi)$ is obtained. This situation is summarised in the following table.

\medskip

\begin{center}
	\begin{tabular}{ll}
		\toprule
		Region & $E(x, \chi)$ \\ \midrule \midrule
		$\Re(s) \geq 1$ & $o(x)$ \\ \midrule
		$\Re(s) > \alpha$ & $O(x^{\alpha} (\log x)^{2})$ \\ \midrule
		$\Re(s) > \frac{1}{2}$ (RH) & $O(\sqrt{x}(\log x)^{2})$ \\ \midrule
		$\Re(s) \geq \frac{1}{2}$ (DRH) & $o(\sqrt{x} \log x)$ \\ \bottomrule
	\end{tabular}
\end{center}

\medskip

The first row displays the classical prime number theorem. The second and third rows are well-known. When the Dirichlet character $\chi$ is principal, the zeta or an $L$-function has a pole at $s = 1$ and the corresponding partial Euler product does not converge in the critical~strip. In this case, Akatsuka~\cite{Akatsuka2017} refined the result~\eqref{Ramanujan} of Ramanujan in the following sense.

\begin{theorem}[{Akatsuka~\cite{Akatsuka2017}}]
If we define 
\begin{equation*}
\zeta_{x}(s) \coloneqq \prod_{p \leq x} \left(1-\frac{1}{p^{s}} \right)^{-1},
\end{equation*}
then the following statements are equivalent.
\begin{enumerate}
\item Let $\psi(x) \coloneqq \psi(x, \mathbbm{1})$ for the trivial character $\mathbbm{1}$. Then
\begin{equation*}
\psi(x) = x+o(\sqrt x \log x).
\end{equation*}
\item There exists $\tau \in \mathbb{R}$ such that
\begin{equation}\label{eq:Akatsuka}
\lim_{x \to \infty} \frac{(\log x)^{m} \zeta_{x}(s)}{\displaystyle{\exp \left(\lim_{\varepsilon \downarrow 0} 
\left(\int_{1+\varepsilon}^{x} \frac{du}{u^{s} \log u}-\log \frac{1}{\varepsilon} \right) \right)}}
\end{equation}
converges to a nonzero value, where $m$ is the order of vanishing of $\zeta(s)$ at $s = \frac{1}{2}+i\tau$. 
\item The above limit exists and is nonzero for every $\tau \in \mathbb{R}$.
\end{enumerate}
If the above conditions are valid, then the Riemann hypothesis holds, and~\eqref{eq:Akatsuka} converges to
\begin{equation*}
(s-1) \frac{\zeta^{(m)}(s)}{e^{(m-1)\gamma} m!} \times
    \begin{cases}
    \sqrt{2} & \text{if $\tau = 0$},\\
    1 & \text{otherwise}.
    \end{cases}
\end{equation*}
\end{theorem}

The statement (2) above is viewed as a version of DRH for $\zeta(s)$, because it is equivalent to the asymptotic formula $E(x, \mathbbm{1}) = o(\sqrt x \log x)$; cf. Conjecture~\ref{conj:DRH-chi} and the ensuing remarks.

\section{Birch--Swinnerton-Dyer Conjecture}\label{Birch-Swinnerton-Dyer-conjecture}
The original Birch--Swinnerton-Dyer conjecture~\cite[Equation (A) on page 79]{BirchSwinnertonDyer1965} pertains to the Euler product asymptotics for an elliptic curve $L$-function at $s = 1$. Let $E/\mathbb{Q}$ be an elliptic curve given by the equation $y^{2} = x^{3}+ax+b$ with the conductor $N$. We also denote $a_{p} \coloneqq p+1-\# E(\mathbb{F}_{p})$ for $p \nmid N$ and $a_{p} \coloneqq p-\# E(\mathbb{F}_{p})$ for $p \mid N$, where $E(\mathbb{F}_{p})$ is the set of nonsingular $\mathbb{F}_{p}$-rational points on a minimal~Weierstrass model for $E$ at $p$. We define
\begin{equation*}
L(s, E) \coloneqq \prod_{p \mid N} \left(1-\frac{a_{p}}{p^{s}} \right)^{-1} \prod_{p \nmid N} \left(1-\frac{a_{p}}{p^{s}}+\frac{1}{p^{2s-1}} \right)^{-1}, \qquad \Re(s) > \frac{3}{2}.
\end{equation*}
Birch--Swinnerton-Dyer~\cite{BirchSwinnertonDyer1965} conjectured that $\# E(\mathbb{F}_{p})$ obeys an asymptotic
\begin{equation}\label{BirchSwinnertonDyer}
\prod_{\substack{p \leq x}} \frac{\# E(\mathbb{F}_{p})}{p} \sim A(\log x)^{r}
\end{equation}
for some $A > 0$ dependent on $E$, where $r$ denotes the rank of $E$, and that
\begin{equation}\label{BirchSwinnertonDyer2}
\mathop{\mathrm{ord}}_{s = 1} L(s,E) = r.
\end{equation}
The conjecture~\eqref{BirchSwinnertonDyer2} is one of the Millennium Problems, but the less famous conjecture~\eqref{BirchSwinnertonDyer} is more important. In fact, Goldfeld~\cite{Goldfeld1982} showed that~\eqref{BirchSwinnertonDyer} implies not only~\eqref{BirchSwinnertonDyer2}, but also the Riemann hypothesis for $L(s,E)$. Nowadays, the conjecture~\eqref{BirchSwinnertonDyer} is seen as a pioneering insight towards DRH since the left-hand side of~\eqref{BirchSwinnertonDyer} matches the Euler product of $L(s, E)$ at $s = 1$. When $r = 0$, it implies the convergence of the Euler product at~$s = 1$ if $L(1, E) \ne 0$.

We denote the corresponding partial Euler product by
\begin{equation*}
\mathrm{Prod}(x, E) \coloneqq \prod_{\substack{p \mid N \\ p \leq x}} \left(1-\frac{a_{p}}{p} \right)^{-1}
\prod_{\substack{p \nmid N \\ p \leq x}} \left(1-\frac{a_{p}}{p}+\frac{1}{p} \right)^{-1}
 = \prod_{p \leq x} \frac{1}{\# E(\mathbb{F}_{p})/p}.
\end{equation*}
\begin{theorem}[Goldfeld~\cite{Goldfeld1982}]\label{Goldfeld}
Let $E/\mathbb{Q}$ be an elliptic curve. If $\mathrm{Prod}(x, E) \sim C(\log x)^{-r}$ for $C > 0$, then $L(s, E)$ satisfies GRH, namely $L(s, E) \ne 0$ for $\Re(s) > 1$, $r = \mathop{\mathrm{ord}}_{s = 1} L(s, E)$, and $C = L^{(r)}(1, E)/(\sqrt{2} e^{r \gamma} r!)$.
\end{theorem}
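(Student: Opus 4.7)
The plan is to adapt the strategy of Goldfeld: translate the multiplicative asymptotic $\mathrm{Prod}(x, E) \sim C/(\log x)^r$ into additive Chebyshev-type information about $a_p$, and then use positivity coming from Rankin--Selberg theory together with a Landau-type oscillation argument to force the zero-free half-plane $\Re(s) > 1$ for $L(s, E)$.

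First I would take the logarithm of the hypothesis. Factorising each good local factor as $L_p(1, E)^{-1} = (1 - \alpha_p/p)(1 - \beta_p/p)$ with Satake parameters satisfying $\alpha_p + \beta_p = a_p$, $\alpha_p \beta_p = p$, and $|\alpha_p| = |\beta_p| = \sqrt{p}$ (Hasse), and expanding the logarithms,
\[
\log L_p(1, E) = \frac{a_p}{p} + \frac{a_p^2 - 2p}{2 p^2} + O(p^{-3/2}),
\]
where the $O(p^{-3/2})$ tail and the finitely many ramified contributions are absolutely summable. The hypothesis thus gives
\[
\sum_{p \le x} \frac{a_p}{p} + \sum_{p \le x} \frac{a_p^2 - 2p}{2 p^2} = -r \log\log x + \log C + O(1).
\]
I would then isolate the $k = 2$ contribution via the Rankin--Selberg factorisation $L(s, E \otimes E) = \zeta(s-1) L(s, \mathrm{sym}^2 E)$: the simple pole of $\zeta(s-1)$ at $s = 2$ combined with holomorphy and nonvanishing of $L(s, \mathrm{sym}^2 E)$ on $\Re(s) = 2$ (Gelbart--Jacquet) yields the Mertens-type asymptotic $\sum_{p \le x} a_p^2/p^2 = \log\log x + K + o(1)$. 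Together with Mertens's $\sum_{p \le x} 1/p = \log\log x + M + o(1)$, this produces
\[
\sum_{p \le x} \frac{a_p}{p} = -\left(r - \tfrac{1}{2}\right) \log\log x + \log C - K' + o(1),
\]
with $K'$ explicitly computable. The extra $-\tfrac{1}{2}$ is the eventual source of the $\sqrt{2}$ in the theorem, mirroring the self-dual ($\chi^2 = \mathbf{1}$) case of~\eqref{Dirichlet-L}.

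Finally, partial summation converts the above into precise control on the Chebyshev-type sum $\psi_E(x) \coloneqq \sum_{p^k \le x} (\alpha_p^k + \beta_p^k) \log p$. Suppose for contradiction that $L(s, E)$ had a zero $\rho_0$ with $\Re(\rho_0) > 1$; the functional equation then furnishes a symmetric zero at $2 - \rho_0$, and the explicit formula for $\psi_E$ would force oscillations of size $\Omega(x^{\Re(\rho_0)})$, contradicting the much tamer growth of $\psi_E$ dictated by the previous step. Hence no such zero exists, and combined with the functional equation this yields GRH. The identifications $r = \ord_{s = 1} L(s, E)$ and $C = L^{(r)}(1, E)/(\sqrt{2}\, e^{r\gamma} r!)$ then follow by matching the Laurent expansion of $\log L(s, E)$ at $s = 1$ against the derived asymptotic, with the $\sqrt{2}$ arising once again from the symmetric square Mertens pole.

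The main obstacle is the Tauberian passage from an averaged $o(1)$-level asymptotic on partial sums to a pointwise zero-free region, since the hypothesis supplies only a very weak remainder in the logarithm of the partial product. This step requires carefully exploiting the positivity built into the Rankin--Selberg framework (where the sign-varying $a_p$ is replaced by the nonnegative $a_p^2$) and a sharp Landau-type oscillation argument sensitive enough to detect a single off-line zero. Pinning down the precise constant $C$ is comparably delicate, demanding exact bookkeeping of all the Mertens-type constants and of the Laurent coefficients of $\log L(s, E)$ at $s = 1$ through the chain of asymptotics.
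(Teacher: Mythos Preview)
The paper does not supply its own proof of this theorem; it is quoted from Goldfeld~\cite{Goldfeld1982}, and the subsequent paragraph merely points to Conrad~\cite{Conrad2005} as the reference that demystifies the $\sqrt{2}$ factor via second-moment $L$-functions. There is therefore no in-paper argument to compare your sketch against.

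That said, your outline is essentially the Goldfeld--Conrad strategy and is sound in its broad strokes: take logarithms, discard the absolutely convergent $k\ge 3$ tail, treat $k=2$ via the symmetric-square Mertens asymptotic (correctly identified as the source of the $\sqrt{2}$), and reduce to a $\psi_E$-estimate. One point deserves correction, however. The passage you flag as the ``main obstacle'' is Abelian, not Tauberian: Abel summation applied to $\sum_{p\le x} a_p/p = -(r-\tfrac12)\log\log x + c + o(1)$ already yields $\psi_E(x)=o(x\log x)$, and then $-L'/L(s,E)=\sum_n \Lambda_E(n)n^{-s}$ converges (hence is analytic) throughout $\Re(s)>1$, forcing $L(s,E)\ne 0$ there. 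No explicit formula, Landau oscillation theorem, or ``positivity from Rankin--Selberg'' is needed for this step; the Rankin--Selberg input is confined to the $k=2$ Mertens constant and has nothing to do with ruling out off-line zeroes. The part you have genuinely underspecified is the identification $r=\ord_{s=1}L(s,E)$ and the exact value of $C$: matching Laurent data at $s=1$ against the partial-product asymptotic requires an Abelian continuity argument (e.g.\ comparing the partial product at $s=1+1/\log x$ with both $L(s,E)$ and with the hypothesis), and this is where Conrad's analysis does the real work.
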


Conrad~\cite{Conrad2005} studied partial Euler products for analytic $L$-functions on the critical line and demystified the $\sqrt{2}$ factor in terms of the second moment $L$-functions in Definition~\ref{definition}. In the meantime, Kuo--Murty~\cite{KuoMurty2005} proved the equivalence between the Birch--Swinnerton-Dyer conjecture and a certain growth condition for the sum over Frobenius eigenvalues at $p$. We also direct the reader to the work of Sheth~\cite{Sheth2023} for a generalisation of~\eqref{Ramanujan} to~$L(s, E)$.

\section{Deep Riemann Hypothesis for \texorpdfstring{$\mathrm{GL}_{n}$}{}}
This section reviews more general partial Euler products\footnote{The notation $\rho$ for Artin representations should not induce confusion with nontrivial zeros of $\zeta(s)$.}
\begin{equation}\label{partial}
\prod_{\mathrm{N}(v) \leq x} \det(1-\rho(\mathrm{Frob}_{v}) \mathrm{N}(v)^{-s})^{-1}.
\end{equation}
When $\rho \ne \mathbbm{1}$, GRH for Artin $L$-functions would follow if~\eqref{partial} converges for $\frac{1}{2} \leq \Re(s) < 1$. We also address the behaviour on the critical line $\Re(s) = \frac{1}{2}$ for (not necessarily nontrivial) irreducible representations. The property $L(s, \rho_{1} \oplus \rho_{2}) = L(s, \rho_{1}) L(s, \rho_{2})$ allows us to restrict to irreducible representations $\rho$. Kurokawa~\cite{Kurokawa2012,Kurokawa2013} posed the following conjecture.

\begin{conjecture}[Deep Riemann hypothesis for Artin $L$-functions]\label{DRH}
For a global field $K$ and a nontrivial irreducible representation $\rho: \mathrm{Gal}(K^{\mathrm{sep}}/K) \rightarrow \mathrm{GL}_{n}(\mathbb{C})$, we have the following.
\begin{enumerate}
\item \label{condition1}
The limit
\begin{equation*}
\lim_{x \to \infty} \prod_{\mathrm{N}(v) \leq x} \det(1-\rho(\mathrm{Frob}_{v}) \mathrm{N}(v)^{-s})^{-1}
\end{equation*}
exists and converges to $L(s, \rho)$ for $\Re(s) > \frac{1}{2}$.
\item \label{condition2}
If $\Re(s) = \frac{1}{2}$ and $m$ is the order of vanishing of $L(s, \rho)$, then
\begin{equation*}
\lim_{x \to \infty} \left((\log x)^{m} \prod_{\mathrm{N}(v) \leq x} 
\det(1-\rho(\mathrm{Frob}_{v}) \mathrm{N}(v)^{-s})^{-1} \right)
 = \frac{L^{(m)}(s, \rho)}{e^{m \gamma} m!} \times
	\begin{cases}
	\sqrt{2}^{\nu(\rho)} & \text{if $s = \frac{1}{2}$},\\
	1 & \text{otherwise},
	\end{cases}
\end{equation*}
where $\nu(\rho) \coloneqq m(\mathrm{sym}^{2} \rho)-m(\wedge^{2} \rho) \in \mathbb{Z}$ with $m(\rho)$ the multiplicity of $\mathbbm{1}$ in $\rho$.
\end{enumerate}
\end{conjecture}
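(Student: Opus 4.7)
The plan is to mirror the Conrad--Akatsuka analysis used for Dirichlet and Selberg $L$-functions and reduce Conjecture~\ref{DRH} to a $\rho$-twisted Chebyshev cancellation estimate. First, I take logarithms of the partial Euler product and expand each local factor,
\begin{equation*}
\log \prod_{\mathrm{N}(v) \leq x} \det(1 - \rho(\mathrm{Frob}_{v}) \mathrm{N}(v)^{-s})^{-1}
= \sum_{k \geq 1} \frac{1}{k} \sum_{\mathrm{N}(v) \leq x} \frac{\mathrm{tr}(\rho(\mathrm{Frob}_{v})^{k})}{\mathrm{N}(v)^{ks}}.
\end{equation*}
For $k \geq 3$ and $\Re(s) \geq 1/2$, the Ramanujan bound $|\alpha_{j, v}| \leq 1$ makes the tail extended to infinity absolutely convergent and thus contributes only to the limiting constant. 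The whole problem reduces to the delicate $k = 1$ and $k = 2$ terms.

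The $\sqrt{2}^{\nu(\rho)}$ factor emerges from the $k = 2$ contribution. Using the identity $\mathrm{tr}(\rho(g)^{2}) = \mathrm{tr}((\mathrm{sym}^{2}\rho)(g)) - \mathrm{tr}((\wedge^{2}\rho)(g))$, the $k = 2$ piece at $s = 1/2$ is a difference of partial log-Euler products of $L(s, \mathrm{sym}^{2}\rho)$ and $L(s, \wedge^{2}\rho)$ at $s = 1$. Invoking the second moment hypothesis of Definition~\ref{definition}(iii), a Mertens argument gives
\begin{equation*}
\frac{1}{2} \sum_{\mathrm{N}(v) \leq x} \frac{\mathrm{tr}(\rho(\mathrm{Frob}_{v})^{2})}{\mathrm{N}(v)}
= \frac{\nu(\rho)}{2} \log \log x + C_{\rho} + o(1),
\end{equation*}
with a constant $C_{\rho}$ determined by the leading coefficients of $L(s, \mathrm{sym}^{2}\rho)$ and $L(s, \wedge^{2}\rho)$ at $s = 1$. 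The logarithmic divergence here must exactly cancel a compensating term arising from the $k = 1$ piece through the Chebyshev bias mechanism (as in Rubinstein--Sarnak and Fiorilli~\cite{Fiorilli2014}), leaving the additive constant $\tfrac{1}{2} \nu(\rho) \log 2$, which exponentiates to $\sqrt{2}^{\nu(\rho)}$. A Taylor expansion of $L(s, \rho)$ at the given critical point then supplies the remaining factor $L^{(m)}(s, \rho) / (e^{m \gamma} m!)$ and the $(\log x)^{m}$ renormalisation.

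The principal task is the $k = 1$ contribution, whose control is equivalent by Abel summation to the twisted Chebyshev asymptotic
\begin{equation*}
\psi(x, \rho) \coloneqq \sum_{\mathrm{N}(v)^{k} \leq x} \mathrm{tr}(\rho(\mathrm{Frob}_{v}^{k})) \log \mathrm{N}(v) = o(\sqrt{x} \log x), \qquad x \to \infty.
\end{equation*}
Condition~(\ref{condition1}) of the conjecture on $\Re(s) > 1/2$ follows from the standard $\psi(x, \rho) = O(\sqrt{x} (\log x)^{2})$ under GRH, via a contour shift of the Perron integral of $-(\Lambda'/\Lambda)(s, \rho)$ past the critical line; condition~(\ref{condition2}) on the critical line requires the refined $o(\sqrt{x} \log x)$ estimate.

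The main obstacle is precisely this $o(\sqrt{x} \log x)$ bound on $\psi(x, \rho)$. Under GRH alone the Weil explicit formula only delivers $O(\sqrt{x}(\log x)^{2})$, and saving the extra $\log x$ demands subtle cancellation among the imaginary parts of the nontrivial zeros of $L(s, \rho)$ --- morally a Montgomery-type pair-correlation or a linear-independence input on those ordinates. Over function fields this step is resolvable via Deligne's purity theorem, as exploited in~\cite{KimuraKoyamaKurokawa2014}, but over number fields no unconditional tool is known, even for $\zeta(s)$. Accordingly this plan establishes Conjecture~\ref{DRH} only \emph{conditionally}, on GRH together with the Chebyshev estimate above --- itself essentially a restatement of DRH --- whereupon Steps 1--3 evaluate the limit in (\ref{condition2}) exactly, with the $\sqrt{2}^{\nu(\rho)}$ factor arising mechanically from the second moment $L$-function.
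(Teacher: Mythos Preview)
First, bear in mind that the statement is a \emph{conjecture}; the paper does not prove it over number fields and only establishes the case $\mathrm{char}(K)>0$ (Theorem~\ref{Artin}). You are right that over number fields the input $\psi(x,\rho)=o(\sqrt{x}\log x)$ is itself essentially DRH, so your ``conditional'' conclusion there is the honest one and matches the paper's stance. The substantive comparison is therefore with the function-field proof.

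Your outline has a real gap in the extraction of the constant $\tfrac{1}{2}\nu(\rho)\log 2$. From your $k=2$ term you correctly obtain $\tfrac{\nu(\rho)}{2}\log\log x + C_{\rho}+o(1)$, but you then simply assert that cancellation against a piece of the $k=1$ term ``leaves the additive constant $\tfrac12\nu(\rho)\log 2$''; nothing in your argument isolates $\log 2$ rather than some other constant, and the appeal to a ``Chebyshev bias mechanism'' is not a computation. Relatedly, the object that your estimate $\psi(x,\rho)=o(\sqrt{x}\log x)$ actually controls via Abel summation is the \emph{prime-power} partial sum $\sum_{\mathrm{N}(v)^{k}\le x}\tfrac{1}{k}\mathrm{tr}(\rho(\mathrm{Frob}_v)^{k})\mathrm{N}(v)^{-ks}$, not your $k=1$ prime sum, so your decomposition and your analytic input are misaligned. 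The paper avoids both problems by organising the logarithm differently: it writes it as $\mathrm{I}+\mathrm{II}+\mathrm{III}$, where $\mathrm{I}$ is the sum over prime powers with $\mathrm{N}(v)^{k}\le q^{m}$ (handled directly by the trace formula derived from Weil's factorisation $L(s,\rho)=\prod_{\lambda}(1-\lambda q^{-s})$, which produces the $-m\log m-m\gamma+\log\prod_{\lambda\ne\sqrt q}(1-\lambda q^{-1/2})$ piece), and $\mathrm{II}$, $\mathrm{III}$ are the overshoot corrections from primes $v$ with $\mathrm{N}(v)\le q^{m}$ but $\mathrm{N}(v)^{k}>q^{m}$ for $k=2$ and $k\ge 3$ respectively. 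In this decomposition $\mathrm{II}=\tfrac12\bigl(\mathcal{T}(q^{m},\cdot)-\mathcal{T}(q^{m/2},\cdot)\bigr)$ with $\mathcal{T}$ the Mertens sum of Lemma~\ref{Mertens}, and the $\log 2$ appears immediately as $\log\log q^{m}-\log\log q^{m/2}$. If you regroup your $k$-by-$k$ split into this prime-power form, your sketch becomes the paper's proof; as written, the crucial constant is asserted rather than derived.
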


\begin{theorem}\label{Artin}
Conjecture~\ref{DRH} holds when $\mathrm{char}(K) > 0$.
\end{theorem}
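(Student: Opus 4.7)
The plan is to exploit two unconditional inputs in the function-field setting. First, Lafforgue's global Langlands correspondence \cite{Lafforgue2002} identifies $L(s, \rho)$ with an automorphic $L$-function $L(s, \pi)$ on $\GL_{n}(\A)$. Second, by Grothendieck's rationality theorem combined with the Riemann Hypothesis for curves over finite fields (Deligne, in the required generality a consequence of Lafforgue), every nontrivial irreducible $L(s, \rho)$ is a polynomial
\begin{equation*}
L(s, \rho) = \prod_{i = 1}^{M} (1 - \beta_{i} q^{-s}), \qquad |\beta_{i}| = q^{1/2}.
\end{equation*}
Thus GRH is a theorem, the zero set is finite, and the same rationality applies to $L(s, \mathrm{sym}^{2} \rho)$ and $L(s, \wedge^{2} \rho)$, so that Definition \ref{definition}\,(iii) holds unconditionally in positive characteristic.

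The heart of the argument is the identity obtained by matching coefficients of $t^{N}$ (with $t = q^{-s}$) in the two representations of $\log L(s, \rho)$, namely
\begin{equation*}
\sum_{d_{v} k = N} d_{v} \, \mathrm{tr}(\rho(\mathrm{Frob}_{v}^{k})) = \sum_{i = 1}^{M} \beta_{i}^{N}, \qquad d_{v} \coloneqq \deg(v).
\end{equation*}
Since $|\beta_{i}| = q^{1/2}$, the right-hand side is $O(M q^{N/2})$; this is the exact analogue of $\vartheta(x, \rho) = O(\sqrt{x})$, now as a theorem. Decomposing
\begin{equation*}
\log \prod_{\mathrm{N}(v) \leq x} \det(1 - \rho(\mathrm{Frob}_{v}) \mathrm{N}(v)^{-s})^{-1}
= \sum_{N \leq \log_{q} x} \frac{t^{N}}{N} \sum_{i = 1}^{M} \beta_{i}^{N} + \mathcal{R}(x, s),
\end{equation*}
where $\mathcal{R}(x, s)$ gathers the $k \geq 2$ tails that escape the cutoff by $d_{v}$, one reads off condition \eqref{condition1} for $\Re(s) > 1/2$: the main sum converges absolutely to $\log L(s, \rho) = -\sum_{i} \log(1 - \beta_{i} t)$, while $\mathcal{R}(x, s)$ converges absolutely on $\Re(s) > 1/4$ via $\#\{v : d_{v} = d\} \ll q^{d}/d$.

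On the critical line each $\beta_{i}^{N} t^{N}$ has unit modulus, but near a root $\beta_{i} t = 1$ the partial sum $\sum_{N \leq \log_{q} x}(\beta_{i} t)^{N}/N$ has the elementary asymptotic $\log \log x + \gamma + O(1)$, which is the Mertens boundary encoded in $\F_{q}[t]$ by the pole of $\zeta_{K}(s)$ at $s = 1$ (F.~K.~Schmidt). Pairing the $m$ roots producing the vanishing at $s_{0} = 1/2 + i\tau$ with the remaining finitely many factors reconstructs $L^{(m)}(s_{0}, \rho)/(e^{m \gamma} m!)$ times $(\log x)^{-m}$ exactly, yielding condition \eqref{condition2} up to the $\sqrt{2}^{\nu}$ factor. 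That factor is extracted from $\mathcal{R}(x, 1/2)$, whose dominant $k = 2$ piece
\begin{equation*}
\frac{1}{2} \sum_{d_{v} \leq (\log_{q} x)/2} d_{v} \sum_{j = 1}^{n} \alpha_{j, \rho}(v)^{2} \, \mathrm{N}(v)^{-1}
\end{equation*}
is governed by the order of pole at $s = 1$ of $L_{2}(s, \rho) = L(s, \mathrm{sym}^{2} \rho) L(s, \wedge^{2} \rho)^{-1}$. Rationality of each factor forces this order to equal $\nu(\rho) = m(\mathrm{sym}^{2} \rho) - m(\wedge^{2} \rho)$, and exponentiating the resulting asymptotic $\tfrac{1}{2} \nu(\rho) \log 2 + o(1)$ supplies $\sqrt{2}^{\nu}$, following Conrad \cite{Conrad2005}.

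The main difficulty is combinatorial rather than analytic: one must arrange the pairing between the partial Taylor expansion of $-\sum_{i} \log(1 - \beta_{i} t)$ at $s_{0}$ and the Mertens-style contribution so that the constant comes out to $L^{(m)}(s_{0}, \rho)/(e^{m \gamma} m!)$ on the nose, with the $\sqrt{2}^{\nu}$ correction appearing precisely at $s_{0} = 1/2$ and nowhere else. All the deep analytic inputs---rationality, GRH, meromorphy and boundary behaviour on $\Re(s) = 1$ of the relevant symmetric- and exterior-square $L$-functions---are theorems of Deligne, Lafforgue, Bump--Ginzburg, Bump--Friedberg, and Jacquet--Shalika in positive characteristic, so the remaining work is the Conrad-style constant matching, executed in the polynomial setting where only finitely many oscillatory geometric sums appear.
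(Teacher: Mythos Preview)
Your strategy coincides with the paper's: both use the polynomial factorisation $L(s,\rho)=\prod_i(1-\beta_i q^{-s})$ with $|\beta_i|=q^{1/2}$, derive the trace formula by matching coefficients of $q^{-Ns}$, split the logarithm of the partial Euler product into a main piece $\sum_{k\deg(v)\le m}$ plus tails in $k\ge 2$, and identify the $k=2$ tail at $s=1/2$ as the source of $\sqrt{2}^{\nu(\rho)}$ via $\mathrm{tr}(\rho(\mathrm{Frob}_v)^2)=\mathrm{tr}(\mathrm{sym}^2\rho(\mathrm{Frob}_v))-\mathrm{tr}(\wedge^2\rho(\mathrm{Frob}_v))$. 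The paper cites only Weil for the Riemann Hypothesis input and then invokes a direct Mertens-type lemma (its Lemma~\ref{Mertens}) rather than the automorphic references you list, but that difference is cosmetic.

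That said, three errors in your execution would cause the computation to fail if taken literally. The trace formula carries a minus sign: comparing the two expansions of $\log L(s,\rho)$ gives $\sum_{d_vk=N}d_v\,\mathrm{tr}(\rho(\mathrm{Frob}_v^k))=-\sum_i\beta_i^N$, not $+\sum_i\beta_i^N$. Your claim that $\mathcal{R}(x,s)$ is absolutely controlled on $\Re(s)>1/4$ is false for the $k=2$ part, which only tends to zero for $\Re(s)>1/2$; indeed, its survival at $s=1/2$ is precisely what produces $\sqrt{2}^{\nu}$, so you cannot discard it there and then resurrect it later. Most seriously, your displayed $k=2$ piece has both the wrong range and a spurious weight: the tail consists of those $v$ with $(\log_q x)/2<d_v\le\log_q x$ (the terms with $d_v\le m$ that escape the cutoff $kd_v\le m$ when $k=2$), not $d_v\le(\log_q x)/2$, and there is no factor $d_v$. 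With your formula the sum would diverge like $\nu(\rho)\log\log x$ rather than converge. With the correct range the paper obtains $\tfrac{1}{2}\bigl(\mathcal{T}(q^m,\mathrm{sym}^2\rho)-\mathcal{T}(q^{m/2},\mathrm{sym}^2\rho)-\mathcal{T}(q^m,\wedge^2\rho)+\mathcal{T}(q^{m/2},\wedge^2\rho)\bigr)$, and the Mertens asymptotic $\mathcal{T}(x,\rho')=m(\rho')\log\log x+C(\rho')+o(1)$ of Lemma~\ref{Mertens} then cancels the divergent parts in pairs, leaving exactly $\tfrac{1}{2}\nu(\rho)\log 2+o(1)$.
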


\begin{proof}
We invoke the factorisation due to Weil~\cite{Weil1948}:
\begin{equation*}
L(s, \rho) = \prod_{\lambda} (1-\lambda q^{-s}),
\end{equation*}
where $|\lambda| = \sqrt{q}$. Then we claim the trace formula
\begin{equation}\label{trace-formula}
\sum_{\deg(v) \mid \ell} \deg(v) 
\mathrm{tr}(\rho(\mathrm{Frob}_{v})^{\frac{\ell}{\deg(v)}})
 = -\sum_{\lambda} \lambda^{\ell}.
\end{equation}
This follows from expanding $\log L(s, \rho)$ in two different ways. On the one hand, it equals
\begin{equation*}
\log \left(\prod_{v} \det(1-\rho(\mathrm{Frob}_{v}) 
\mathrm{N}(v)^{-s})^{-1} \right)
 = \sum_{\ell = 1}^{\infty} \frac{1}{\ell} \left(\sum_{\deg(v) \mid \ell} \deg(v) 
\mathrm{tr}(\rho(\mathrm{Frob}_{v})^{\frac{\ell}{\deg(v)}}) \right) q^{-\ell s}.
\end{equation*}
On the other hand, there holds
\begin{equation*}
\log L(s, \rho) = \log \left(\prod_{\lambda} (1-\lambda q^{-s}) \right)
 = \sum_{\ell = 1}^{\infty} \frac{1}{\ell} \left(-\sum_{\lambda} \lambda^{\ell} \right) q^{-\ell s}.
\end{equation*}
The comparison of the two expressions gives~\eqref{trace-formula}. To verify Conjecture~\ref{DRH} (1), we compute
\begin{align*}
&\log \left(\prod_{\mathrm{N}(v) \leq q^{m}} 
\det(1-\rho(\mathrm{Frob}_{v}) \mathrm{N}(v)^{-s})^{-1} \right)\\
& \qquad = \sum_{1 \leq k \deg(v) \leq m} 
\frac{\mathrm{tr}(\rho(\mathrm{Frob}_{v})^{k})}{k} q^{-k \deg(v) s}
 + \sum_{k = 2}^{\infty} \frac{1}{k} \sum_{\frac{m}{k} < \deg(v) \leq m} 
\mathrm{tr}(\rho(\mathrm{Frob}_{v})^{k}) q^{-k \deg(v) s}.
\end{align*}
Note that $\sum_{v} q^{-\deg(v) \alpha} < \infty$ for $\alpha > 1$ and $|\mathrm{tr}(\rho(\mathrm{Frob}_{v})^{k})| \leq n$ since $\rho(\mathrm{Frob}_{v})$ is conjugate~to a unitary matrix. Hence the second term is $o(1)$ as $m \to \infty$. If $\ell = k\deg(v)$, then the~first term becomes
\begin{equation*}
\sum_{\ell = 1}^{m} \frac{1}{\ell} \left(\sum_{\deg(v) \mid \ell} \deg(v) 
\mathrm{tr}(\rho(\mathrm{Frob}_{v})^{\frac{\ell}{\deg(v)}}) \right) q^{-\ell s}
 = \sum_{\ell = 1}^{m} \frac{1}{\ell} \left(-\sum_{\lambda} \lambda^{\ell} \right) q^{-\ell s}.
\end{equation*}
The desired claim now follows from
\begin{equation*}
\sum_{\ell = 1}^{m} \frac{1}{\ell} \left(-\sum_{\lambda} \lambda^{\ell} \right) q^{-\ell s}
 = -\sum_{\lambda} \sum_{\ell = 1}^{\infty} \frac{1}{\ell} \left(\frac{\lambda}{q^{s}} \right)^{\ell}+o(1)
 = \log L(s, \rho)+o(1).
\end{equation*}

To verify Conjecture~\ref{DRH} (2), we write
\begin{equation*}
\log \left(\prod_{\mathrm{N}(v) \leq q^{m}} \det(1-\rho(\mathrm{Frob}_{v}) 
\mathrm{N}(v)^{-\frac{1}{2}})^{-1} \right) = \sum_{\deg(v) \leq m} 
\sum_{k = 1}^{\infty} \frac{\mathrm{tr}(\rho(\mathrm{Frob}_{v})^{k})}{k} q^{-\frac{k}{2} \deg(v)}
 = \text{I}+\text{II}+\text{III},
\end{equation*}
where
\begin{align*}
\text{I}   &\coloneqq \sum_{k \deg(v) \leq m} \frac{\mathrm{tr}(\rho(\mathrm{Frob}_{v})^{k})}{k} q^{-\frac{k}{2} \deg(v)},\\
\text{II}  &\coloneqq \frac12 \sum_{\frac{m}{2} < \deg(v) \leq m} \mathrm{tr}(\rho(\mathrm{Frob}_{v})^{2}) q^{-\deg(v)},\\
\text{III} &\coloneqq \sum_{k = 3}^{\infty} \sum_{\frac{m}{k} < \deg(v) \leq m} 
\frac{\mathrm{tr}(\rho(\mathrm{Frob}_{v})^{k})}{k} q^{-\frac{k}{2} \deg(v)}.
\end{align*}
By the same reason as above, we have $\text{III}=o(1)$ as $m \to \infty$. Furthermore, if $\nu$ denotes the multiplicity of $\lambda = \sqrt{q}$, then the trace formula~\eqref{trace-formula} implies
\begin{equation*}
\text{I} = \sum_{\ell = 1}^{m} \frac{1}{\ell} \left(\sum_{\deg(v) \mid \ell} \deg(v) 
\mathrm{tr}(\rho(\mathrm{Frob}_{v})^{\frac{\ell}{\deg(v)}}) \right) q^{-\frac{\ell}{2}}\\
 = -\nu \sum_{\ell = 1}^{m} \frac{1}{\ell} -\sum_{\lambda \ne \sqrt{q}} 
\sum_{\ell = 1}^{m} \frac{1}{\ell} \left(\frac{\lambda}{\sqrt{q}} \right)^{\ell}.
\end{equation*}
This yields the expression
\begin{equation*}
\text{I} = -\nu \log m-\nu \gamma+\log \left(\prod_{\lambda \ne \sqrt{q}} (1-\lambda q^{-\frac{1}{2}}) \right)+o(1),
\end{equation*}
because for $\lambda \ne \sqrt{q}$,
\begin{equation*}
-\sum_{\ell = 1}^{m} \frac{1}{\ell} \left(\frac{\lambda}{\sqrt{q}} \right)^{\ell} 
 = -\sum_{\ell = 1}^{\infty} \frac{1}{\ell} \left(\frac{\lambda}{\sqrt{q}} \right)^{\ell}+o(1)
 = \log(1-\lambda q^{-\frac{1}{2}})+o(1).
\end{equation*}
To approximate $\text{II}$, the identity $\mathrm{tr}(\rho(\mathrm{Frob}_{v})^{2}) = \mathrm{tr}(\mathrm{sym}^{2} \rho(\mathrm{Frob}_{v}))-\mathrm{tr}(\wedge^{2} \rho(\mathrm{Frob}_{v}))$ implies
\begin{equation*}
\text{II} = \frac{1}{2}(\mathcal{T}(q^{m}, \mathrm{sym}^{2} \rho)-\mathcal{T}(q^{m}, \wedge^{2} \rho)
 - \mathcal{T}(q^{\frac{m}{2}}, \mathrm{sym}^{2} \rho)+\mathcal{T}(q^{\frac{m}{2}}, \wedge^{2} \rho)),
\end{equation*}
where $\mathcal{T}(x, \rho)$ is defined by~\eqref{T}. It now follows from~\cite[Theorem 5]{Rosen1999} or Lemma~\ref{Mertens}~that
\begin{equation*}
\text{II} 
 = \frac{1}{2}(m(\mathrm{sym}^{2} \rho)
 - m(\wedge^{2} \rho)) \log 2+o(1).
\end{equation*}
This completes the proof of Conjecture~\ref{DRH} (2).
\end{proof}

\begin{lemma}\label{Mertens}
For a complex representation $\rho$ of $\mathrm{Gal}(K^{\mathrm{sep}}/K)$, we have that
\begin{equation}\label{T}
\mathcal{T}(x, \rho) \coloneqq \sum_{\mathrm{N}(v) \leq x} \mathrm{tr}(\rho(\mathrm{Frob}_{v})) \mathrm{N}(v)^{-1}
 = m(\rho) \log \log x+C(\rho)+o(1)
\end{equation}
as $m \to \infty$, where $C(\rho)$ is a constant independent of $x$.
\end{lemma}

\begin{proof}
By the factorisation $\rho = \mathbbm{1}^{\oplus m(\rho)} \oplus \rho_{0}$ for $m(\rho_{0}) = 0$, it suffices to establish
\begin{equation}\label{star21}
\mathcal{T}(x, \mathbbm{1}) = \log \log x+C(\mathbbm{1})+o(1)
\end{equation}
and
\begin{equation}\label{star22}
\mathcal{T}(x, \rho_{0}) = C(\rho_{0})+o(1).
\end{equation}
To prove~\eqref{star21}, we employ a version of the prime number theorem (see~\cite[Page~12]{Mitsui1956} for $\mathrm{char}(K) = 0$ and \cite[Theorem~5.12]{Rosen2002} for $\mathrm{char}(K)  > 0$)
\begin{equation*}
\pi_K(x) \coloneqq \sum_{\mathrm{N}(v) \leq x} 1 
 = \mathrm{Li}(x)+O(xe^{-c\sqrt{\log x}})
 = \frac{x}{\log x}+O \left(\frac x{(\log x)^{2}} \right).
\end{equation*}
It follows from partial summation that
\begin{align*}
\sum_{\mathrm{N}(v) \leq x} \frac{1}{\mathrm{N}(v)}
& = \frac{\pi_{K}(x)}{x}-\int_{2}^{x} \left(\frac{t}{\log t}+O \left(\frac{t}{(\log t)^{2}} \right) \right) \left(-\frac{1}{t^{2}} \right) dt\\
& = \frac{1}{\log x}+O \left(\frac{1}{(\log x)^2} \right)+\int_{2}^{x} \left(\frac{1}{t \log t}+O \left(\frac{1}{t(\log t)^2} \right) \right)dt\\
& = \log \log x +C(\mathbbm{1})+o(1)
\end{align*}
for some constant $C(\mathbbm{1})$. The asymptotic~\eqref{star22} is equivalent to the nonvanishing $L(1, \rho) \ne 0$; see~\cite[Corollary 5.47]{IwaniecKowalski2004} for $\mathrm{char}(K) = 0$ and~\cite[Theorem 5]{Rosen1999} for $\mathrm{char}(K) > 0$.
\end{proof}

To a prime $p$ outside the finite set where $\pi_{p}$ is unramified, we assign a semisimple conjugacy class $\mathrm{diag}(\mu_{\pi}(1), \dots, \mu_{\pi}(n))$. Such a class is parametrised by its eigenvalues $\alpha_{j, \pi}(p)$, $1 \leq j \leq n$, called the Satake parameters~\cite{Satake1963} for $\pi_{p}$ unramified (the local factors are represented~by the Langlands parameters of $\pi_{p}$ at the ramified primes). This~gives the definition of the local factors at the unramified primes:
\begin{equation*}
L(s, \pi_{p}) \coloneqq \prod_{j = 1}^{n} (1-\alpha_{j, \pi}(p) p^{-s})^{-1}.
\end{equation*}
They are normalised so that the Ramanujan conjecture states $|\alpha_{j, \pi}(p)| = 1$ and $\Re(\mu_{\pi}(j)) = 0$. As a result, the standard $L$-function associated to $\pi$ boils down to
\begin{equation*}
L(s, \pi) \coloneqq \sum_{n = 1}^{\infty} \frac{\lambda_{\pi}(n)}{n^{s}}
 = \prod_{p} \prod_{j = 1}^{n} (1-\alpha_{j, \pi}(p) p^{-s})^{-1}.
\end{equation*}

In what follows, we work over general number fields to state the most general form of~DRH for standard $L$-functions. In a similar fashion, we define the standard $L$-function over $K$ by
\begin{equation*}
L(s, \pi) \coloneqq \prod_{v} \det(1-M_{\pi}(v) \mathrm{N}(v)^{-s})^{-1},
\end{equation*}
where $v$ runs through prime ideals of $K$ with the Satake parameters $M_{\pi}(v) \in \mathrm{Conj}(\mathrm{GL}_{n}(\mathbb{C}))$.
\begin{conjecture}\label{DRH2}
For a global field $K$ and a nontrivial cuspidal automorphic representation $\pi$ of $\mathrm{GL}_{n}(\mathbb{A}_{K})$, we have the following.
\begin{enumerate}[(1)]
\item The limit
\begin{equation*}
\lim_{x \to \infty} \prod_{\mathrm{N}(v) \leq x} \det(1-M_{\pi}(v) \mathrm{N}(v)^{-s})^{-1}
\end{equation*}
exists and converges to $L(s, \pi)$ for $\Re(s) > \frac{1}{2}$.
\item The limit
\begin{equation*}
\lim_{x \to \infty} \prod_{\mathrm{N}(v) \leq x} \det(1-M_{\pi}(v) \mathrm{N}(v)^{-\frac{1}{2}})^{-1}
\end{equation*}
converges to a nonzero value if and only if $L(\frac{1}{2}, \pi) \ne 0$.
\end{enumerate}
\end{conjecture}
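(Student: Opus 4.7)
The plan is to adapt the strategy of Theorem~\ref{Artin} and to establish Conjecture~\ref{DRH2} first in the function field setting, where it reduces to Conjecture~\ref{DRH} via the Lafforgue correspondence~\cite{Lafforgue2002}, and then to identify the obstruction in the number field setting. Assume $\mathrm{char}(K) > 0$. Lafforgue's theorem attaches to the cuspidal automorphic representation $\pi$ an irreducible $n$-dimensional $\ell$-adic Galois representation $\rho_{\pi}$ with $L(s, \pi) = L(s, \rho_{\pi})$ and with the Satake class $M_{\pi}(v)$ matching $\rho_{\pi}(\mathrm{Frob}_{v})$ at every unramified $v$. Since $\pi$ is cuspidal, $\rho_{\pi}$ is nontrivial and irreducible, so Theorem~\ref{Artin} should apply termwise to the partial Euler products of $L(s, \pi)$.

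For part (1), I would expand
\begin{equation*}
\log \prod_{\mathrm{N}(v) \leq q^{m}} \det(1-M_{\pi}(v) \mathrm{N}(v)^{-s})^{-1}
 = \sum_{1 \leq k \deg(v) \leq m} \frac{\mathrm{tr}(M_{\pi}(v)^{k})}{k} q^{-k \deg(v) s}+R(m, s),
\end{equation*}
where the remainder $R(m, s)$ is $o(1)$ for $\Re(s) > 1/2$ by Weil's bound $|\lambda| = \sqrt{q}$ on the Frobenius eigenvalues of $\rho_{\pi}$ together with absolute convergence of $\sum_{v} \mathrm{N}(v)^{-\alpha}$ for $\alpha > 1$. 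The trace formula~\eqref{trace-formula} applied to $\rho_{\pi}$ rewrites the leading sum as $\sum_{\ell \leq m} \ell^{-1} \bigl(-\sum_{\lambda} \lambda^{\ell}\bigr) q^{-\ell s}$, which tends to $\log L(s, \pi)$ as $m \to \infty$.

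For part (2), I would decompose the logarithm of the partial product at $s = 1/2$ into the three pieces $\mathrm{I}+\mathrm{II}+\mathrm{III}$ used in the proof of Theorem~\ref{Artin}. The tail $\mathrm{III}$ is $o(1)$ by absolute convergence. Using the trace formula once more, the diagonal piece $\mathrm{I}$ should produce
\begin{equation*}
\mathrm{I} = -\nu(\pi) \log m-\nu(\pi) \gamma+\log \prod_{\lambda \ne \sqrt{q}} \bigl(1-\lambda q^{-\frac{1}{2}}\bigr)+o(1),
\end{equation*}
where $\nu(\pi)$ is the multiplicity of $\sqrt{q}$ among the Frobenius eigenvalues of $\rho_{\pi}$. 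The factorisation $L(s, \pi) = \prod_{\lambda}(1-\lambda q^{-s})$ forces $\nu(\pi)$ to coincide with the order of vanishing of $L(s, \pi)$ at $s = 1/2$. The second-moment piece $\mathrm{II}$ is controlled via the identity $\mathrm{tr}(M_{\pi}(v)^{2}) = \mathrm{tr}(\mathrm{sym}^{2} M_{\pi}(v))-\mathrm{tr}(\wedge^{2} M_{\pi}(v))$ together with Lemma~\ref{Mertens} applied to $\mathrm{sym}^{2} \rho_{\pi}$ and $\wedge^{2} \rho_{\pi}$, which contributes $\tfrac{1}{2}(m(\mathrm{sym}^{2} \rho_{\pi})-m(\wedge^{2} \rho_{\pi})) \log 2+o(1)$. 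Combining the three pieces, the partial Euler product (undivided by any logarithmic factor) converges to a nonzero limit exactly when $\nu(\pi) = 0$, that is, when $L(1/2, \pi) \ne 0$, while in the vanishing case it tends to $0$ at rate $(\log x)^{-\nu(\pi)}$.

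The main obstacle is the number field case. In characteristic zero there is no Lafforgue-type correspondence producing a finite set of Frobenius eigenvalues of absolute value $\sqrt{q}$; the trace formula~\eqref{trace-formula} is replaced by a Riemann--von Mangoldt explicit formula involving infinitely many zeros of $L(s, \pi)$, so the tail estimates underlying parts (1) and (2) cease to be automatic and must be replaced by delicate zero-density and subconvexity inputs. Controlling the second-moment contribution $\mathrm{II}$ further requires that $\mathrm{sym}^{2} \pi$ and $\wedge^{2} \pi$ be isobaric automorphic representations on $\GL_{n(n \pm 1)/2}(\A)$ with well-understood behaviour at $s = 1$, which is presently available only in low degree. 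Consequently, outside of function fields, Conjecture~\ref{DRH2} appears to sit at essentially the same depth as GRH for $L(s, \pi)$ augmented by the functoriality needed to access its second-moment $L$-function, and a complete proof seems to lie beyond current techniques.
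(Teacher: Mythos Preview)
Your proposal is accurate and matches the paper's treatment, but note that the statement you were given is a \emph{conjecture}: the paper does not prove it in general. What the paper does prove is the function field case, recorded separately as Theorem~\ref{automorphic}, and its entire argument is the one-line reduction you identified---invoke Lafforgue~\cite{Lafforgue2002} to pass from $\pi$ to an irreducible Galois representation $\rho_{\pi}$ with $L(s,\pi)=L(s,\rho_{\pi})$, then appeal to Theorem~\ref{Artin}. Your write-up simply unpacks that appeal by re-running the $\mathrm{I}+\mathrm{II}+\mathrm{III}$ decomposition with $M_{\pi}(v)$ in place of $\rho(\mathrm{Frob}_{v})$, which is correct and slightly more explicit than the paper. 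Your closing paragraph on the number field case is likewise appropriate: the paper offers no argument there, and Conjecture~\ref{DRH2} remains open in characteristic zero for precisely the reasons you sketch.
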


The work of Lafforgue~\cite{Lafforgue2002} implies the following.
\begin{theorem}\label{automorphic}
Conjecture~\ref{DRH2} holds when $\mathrm{char}(K) > 0$.
\end{theorem}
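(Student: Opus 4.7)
The plan is to reduce Conjecture~\ref{DRH2} to the Galois-theoretic statement of Theorem~\ref{Artin} through Lafforgue's Langlands correspondence. Recall that Lafforgue~\cite{Lafforgue2002} furnishes a bijection between cuspidal automorphic representations $\pi$ of $\GL_{n}(\A_{K})$ and irreducible $n$-dimensional $\ell$-adic Galois representations $\rho$ of $\Gal(K^{\mathrm{sep}}/K)$, compatible with local factors at every place: $L(s, \pi_{v}) = L(s, \rho_{v})$. At unramified $v$ this compatibility says that the Satake parameter $M_{\pi}(v)$ equals (the conjugacy class of) $\rho(\mathrm{Frob}_{v})$, while at ramified $v$ both sides are computed from the inertia-invariant part. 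Consequently the partial Euler products appearing in Conjecture~\ref{DRH2} agree term by term with those in Conjecture~\ref{DRH}, and it suffices to verify that the proof of Theorem~\ref{Artin} goes through for the $\rho$ supplied by Lafforgue.

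Assuming this compatibility, each part of Conjecture~\ref{DRH2} is immediate. Part (1) follows from Theorem~\ref{Artin}(1): the partial Euler product converges to $L(s, \rho) = L(s, \pi)$ for $\Re(s) > 1/2$. For part (2), let $m = \ord_{s = 1/2} L(s, \pi)$. By Theorem~\ref{Artin}(2), the quantity $(\log x)^{m}$ times the partial Euler product at $s = 1/2$ tends to $L^{(m)}(1/2, \pi) \sqrt{2}^{\nu(\pi)}/(e^{m \gamma} m!)$, which is a nonzero complex number. Removing the $(\log x)^{m}$ factor, the bare partial Euler product converges to a nonzero limit precisely when $m = 0$, i.e.\ when $L(1/2, \pi) \ne 0$, and otherwise vanishes in the limit. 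This yields the equivalence asserted in Conjecture~\ref{DRH2}(2).

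The one step that requires genuine care is the fact that Theorem~\ref{Artin} was stated for complex Artin representations $\rho \colon \Gal(K^{\mathrm{sep}}/K) \to \GL_{n}(\C)$, whereas the Lafforgue correspondence produces $\ell$-adic representations generally of infinite image. However, an inspection of the proof of Theorem~\ref{Artin} shows that the only inputs used are: (i) the Weil factorisation $L(s, \rho) = \prod_{\lambda}(1-\lambda q^{-s})$ with $|\lambda| = \sqrt{q}$, (ii) the resulting trace formula~\eqref{trace-formula}, and (iii) the uniform bound $|\mathrm{tr}(\rho(\mathrm{Frob}_{v})^{k})| \leq n$ arising from the unitarity of the Frobenius. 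All three persist in the $\ell$-adic setting: (i) and (iii) follow from Deligne's purity theorem combined with the fact that the $L$-function of a cuspidal automorphic representation on $\GL_{n}$ over a function field is a polynomial in $q^{-s}$ whose inverse roots are Weil $q$-numbers of weight $1$ (a consequence of Lafforgue's construction), and (ii) is purely formal from the factorisation. The estimate of the secondary sum $\mathrm{II}$ via Lemma~\ref{Mertens} applies equally well to the symmetric and exterior square lifts, which exist on the automorphic side by Kim's work and on the Galois side by tensor constructions.

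The main obstacle, therefore, is not analytic but organisational: one must verify cleanly that Lafforgue's local-global compatibility identifies the partial Euler products over $\mathrm{N}(v) \leq x$ on both sides (including the ramified contribution) and that the Weil purity inputs to the proof of Theorem~\ref{Artin} remain intact in the $\ell$-adic setting. Once these bookkeeping points are confirmed, Theorem~\ref{automorphic} is a direct transcription of Theorem~\ref{Artin} across the Langlands correspondence, with no new analytic argument required.
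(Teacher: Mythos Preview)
Your proposal is correct and follows exactly the route the paper intends: the paper's entire argument for Theorem~\ref{automorphic} is the single sentence ``As a consequence of the result of Lafforgue~\cite{Lafforgue2002}, we obtain the following,'' so you have supplied the details the paper omits. Your observation that the proof of Theorem~\ref{Artin} is stated for complex representations while Lafforgue produces $\ell$-adic ones, and your verification that the three analytic inputs (the Weil/Grothendieck factorisation with $|\lambda|=\sqrt{q}$, the trace formula, and the bound $|\mathrm{tr}(\rho(\mathrm{Frob}_{v})^{k})|\le n$) survive via Deligne's purity and Lafforgue's Ramanujan bound, is precisely the bookkeeping the paper leaves implicit.

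One small remark: the appeal to ``Kim's work'' for the symmetric and exterior square lifts is misplaced here, since Kim's functoriality results are over number fields. Over a function field you do not need any automorphic lifting at all for the term $\mathrm{II}$: $\mathrm{sym}^{2}\rho$ and $\wedge^{2}\rho$ are simply $\ell$-adic representations, and Lemma~\ref{Mertens} (or rather its $\ell$-adic analogue) applies to them directly because their $L$-functions are rational in $q^{-s}$ with all nontrivial zeros on $\Re(s)=1/2$ by Deligne, so that $L(1,\sigma_{0})\ne 0$ for the nontrivial part $\sigma_{0}$. With that adjustment your argument is complete.
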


\section{Applications to the Chebyshev Bias}
We delve into implications due to Aoki--Koyama~\cite{AokiKoyama2023} and Koyama--Kurokawa~\cite{KoyamaKurokawa2022}. DRH clarifies that the Chebyshev bias makes a well-balanced disposition of the sequence of the primes in terms of the convergence of Euler products at the centre. Aoki--Koyama~\cite{AokiKoyama2023} addressed the deflection conditionally on DRH -- a new formulation of the Chebyshev bias.

In 1853, Chebyshev noticed in a letter to Fuss that primes congruent to $3$ modulo $4$ seem to dominate over those congruent to $1$ modulo $4$. If $\pi(x; q, a)$ is the number of primes $p \leq x$ such that $p \equiv a \tpmod q$, then the inequality $\pi(x; 4, 3) \geq \pi(x; 4, 1)$ holds for more than 97~\% of $x < 10^{11}$. By a classical theorem of Dirichlet, we expect that the number of primes of the form $4k+1$ and $4k+3$ ought to be asymptotically equal. Hence, the Chebyshev~bias~shows that primes of the form $4k+3$ appear earlier than those of the form $4k+1$. Littlewood~\cite{Littlewood1914} established the infinitude of sign changes of $\pi(x; 4, 3)-\pi(x; 4, 1)$. Furthermore, Knapowski--Tur\'{a}n~\cite{KnapowskiTuran1962} predicted that the density of numbers $x$ for which $\pi(x; 4, 3) \geq \pi(x; 4, 1)$ holds is $1$, while Kaczorowski~\cite{Kaczorowski1995} disproved their conjecture conditionally on GRH. They have a logarithmic density due to Rubinstein--Sarnak~\cite{RubinsteinSarnak1994}, which is approximately $0.9959 \cdots$.

The work of Aoki--Koyama~\cite{AokiKoyama2023} introduces the weighted counting function
\begin{equation*}
\pi_{s}(x; q, a) \coloneqq \sum_{\substack{p < x \\ p \equiv a \tpmod q}} \frac{1}{p^{s}}, \qquad s \geq 0,
\end{equation*}
where the smaller prime $p$ allows a higher contribution to $\pi_s(x; q, a)$. The function $\pi_{s}(x; q, a)$ is more proper than $\pi(x; q, a)$ to represent the phenomenon since it reflects the size of primes that $\pi(x; q, a)$ ignores. While the natural density of the set
\begin{equation*}
A(s) \coloneqq \{x > 0: \pi_{s}(x; 4, 3)-\pi_{s}(x; 4, 1) \geq 0 \}
\end{equation*}
does not exist when $s=0$, they proved under DRH that it is equal to 1 when $s = \frac{1}{2}$, namely
\begin{equation*}
\lim_{X \to \infty} \frac{1}{X} \int_{t \in A(\frac{1}{2}) \cap [2,X]} dt = 1.
\end{equation*}
More precisely, the existence of a constant $C$ such that
\begin{equation}\label{CB}
\pi_{\frac{1}{2}}(x; 4, 3)-\pi_{\frac{1}{2}}(x; 4, 1) = \frac{1}{2} \log \log x+C+o(1)
\end{equation}
is equivalent to a weak form of DRH for the Dirichlet $L$-function $L(s,\chi_{-4})$.

\begin{conjecture}[Convergence conjecture]\label{CC}
The Euler product of $L(s,\rho)$ converges at $s = \frac{1}{2}$, namely there exists a constant $L$ such that
\begin{equation*}
(\log x)^{m} \prod_{\mathrm{N}(v) \leq x} \det(1-\rho(\mathrm{Frob}_{v}) \mathrm{N}(v)^{-\frac{1}{2}})^{-1}
 = L+o(1)
\end{equation*}
as $x \to \infty$, where $m = \mathrm{ord}_{s = \frac{1}{2}} L(s,\rho)$.
\end{conjecture}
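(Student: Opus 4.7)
The plan is to follow the strategy of Theorem~\ref{Artin} (which already establishes the conjecture in positive characteristic) and to pinpoint what additional input is required in characteristic zero. After taking logarithms, the partial Euler product decomposes as
\begin{equation*}
\log\prod_{\mathrm{N}(v)\leq x}\det(1-\rho(\mathrm{Frob}_{v})\mathrm{N}(v)^{-1/2})^{-1}
=\sum_{k=1}^{\infty}\frac{1}{k}\sum_{\mathrm{N}(v)\leq x}\mathrm{tr}(\rho(\mathrm{Frob}_{v})^{k})\mathrm{N}(v)^{-k/2},
\end{equation*}
and I would handle the three ranges $k=1$, $k=2$, and $k\geq 3$ separately.

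The $k\geq 3$ tail converges absolutely to a finite constant, since $|\mathrm{tr}(\rho(\mathrm{Frob}_{v})^{k})|\leq n$ and $\sum_{v}\mathrm{N}(v)^{-k/2}<\infty$ once $k\geq 3$. For the $k=2$ piece, the identity $\mathrm{tr}(\rho^{2})=\mathrm{tr}(\mathrm{sym}^{2}\rho)-\mathrm{tr}(\wedge^{2}\rho)$ combined with Lemma~\ref{Mertens} applied to $\mathrm{sym}^{2}\rho$ and $\wedge^{2}\rho$ yields
\begin{equation*}
\frac{1}{2}\sum_{\mathrm{N}(v)\leq x}\mathrm{tr}(\rho(\mathrm{Frob}_{v})^{2})\mathrm{N}(v)^{-1}=\frac{\nu(\rho)}{2}\log\log x+C_{2}+o(1),
\end{equation*}
with $\nu(\rho)=m(\mathrm{sym}^{2}\rho)-m(\wedge^{2}\rho)$. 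Legitimising this step over number fields requires the analytic continuation and nonvanishing on $\Re(s)=1$ of $L(s,\mathrm{sym}^{2}\rho)$ and $L(s,\wedge^{2}\rho)$, which is available in considerable generality through work of Shahidi, Kim, and Bump--Ginzburg and is automatic once Langlands functoriality is granted.

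The hard part will be the $k=1$ contribution
\begin{equation*}
S_{1}(x)\coloneqq\sum_{\mathrm{N}(v)\leq x}\mathrm{tr}(\rho(\mathrm{Frob}_{v}))\mathrm{N}(v)^{-1/2}.
\end{equation*}
Combining the preceding asymptotics with the target $(\log x)^{m}\prod\to L$ forces
\begin{equation*}
S_{1}(x)=-\Big(m+\frac{\nu(\rho)}{2}\Big)\log\log x+\mathrm{const}+o(1),
\end{equation*}
which, via partial summation, is tantamount to the sharp estimate $\psi_{\rho}(x)=-m\sqrt{x}+o(\sqrt{x}\log x)$, where $\psi_{\rho}(x)=\sum_{\mathrm{N}(\mathfrak{p})^{k}\leq x}\mathrm{tr}(\rho(\mathrm{Frob}_{\mathfrak{p}})^{k})\log\mathrm{N}(\mathfrak{p})$. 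The natural tool is the truncated explicit formula: under GRH each nontrivial zero $\frac{1}{2}+i\gamma$ of $L(s,\rho)$ contributes $x^{1/2+i\gamma}$ to $\psi_{\rho}(x)$, and the required bound reduces to showing $\sum_{|\gamma|\leq T}x^{i\gamma}/(\frac{1}{2}+i\gamma)=o(\log x)$ for a suitable truncation $T=T(x)$. Over function fields this sum is finite because the zeros come from Weil's factorisation with $|\lambda|=\sqrt{q}$, so the cancellation is automatic, which is precisely what makes Theorem~\ref{Artin} succeed. Over number fields no such finite-zero reduction is available, and extracting the requisite cancellation appears to demand either zero-density input strictly beyond GRH or genuinely new ideas about the joint distribution of zeros; this is what keeps the conjecture open in characteristic zero.
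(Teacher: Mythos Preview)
The statement you are addressing is presented in the paper as a \emph{conjecture}, not as a theorem; the paper offers no proof over number fields and treats the Convergence Conjecture as a weakening of DRH that remains open in characteristic zero. Your write-up correctly recognises this: you reproduce the decomposition into the ranges $k=1$, $k=2$, $k\geq 3$, dispose of the latter two exactly as the paper does in the proof of Theorem~\ref{Artin} (the function field case), and then isolate the $k=1$ contribution as the genuine obstruction. This is in complete agreement with the paper's own viewpoint, and your remark that the finiteness of the zero set via Weil's factorisation is what makes the function field argument go through is precisely the content of the paper's proof of Theorem~\ref{Artin}.

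One small caution on the quantitative reduction: the equivalence you state between $S_{1}(x)=-(m+\nu(\rho)/2)\log\log x+\mathrm{const}+o(1)$ and $\psi_{\rho}(x)=-m\sqrt{x}+o(\sqrt{x}\log x)$ is not quite right as written. The prime-power contributions with $k=2$ inside $\psi_{\rho}$ already account for a term of size $\nu(\rho)\sqrt{x}$ (by Chebotarev applied to $\mathrm{sym}^{2}\rho$ and $\wedge^{2}\rho$), and the real zero at $s=1/2$ of multiplicity $m$ contributes $-2m\sqrt{x}$ in the explicit formula, not $-m\sqrt{x}$. The correct reformulation, matching Conrad's equivalence quoted in the paper, is that the convergence in Conjecture~\ref{CC} is equivalent to $\psi_{\rho}(x)=o(\sqrt{x}\log x)$ after the $(\log x)^{m}$ normalisation has been absorbed; the $\log\log x$ coefficient in $S_{1}$ then balances against both the $k=2$ term and the $m\log\log x$ coming from the normalising factor. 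This does not affect your conclusion that the conjecture is open over number fields, but the bookkeeping should be tidied if you want the heuristic to be sharp.
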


By~\cite[Theorem 5.3]{Conrad2005}, Conjecture~\ref{CC} is stronger than GRH implying the convergence of Euler products for $\Re(s) > \frac{1}{2}$. The proof of the equivalence between~\eqref{CB} and Conjecture~\ref{CC} looks like the following. Because $m = 0$ when $\rho = \chi_{-4}$, Conjecture~\ref{CC} is equivalent to
\begin{equation*}
\prod_{p \leq x} \left(1-\frac{\chi_{-4}(p)}{\sqrt{p}} \right)^{-1} = L+o(1),
\end{equation*}
where $L \ne 0$. This asymptotic is recast as
\begin{equation}\label{conv}
\sum_{p \leq x} \log \left(1-\frac{\chi_{-4}(p)}{\sqrt{p}} \right)^{-1} = \log L+o(1).
\end{equation}
Upon expanding the left-hand side as
\begin{equation*}
\sum_{k = 1}^{\infty} \sum_{p \leq x} \frac{\chi_{-4}(p)^{k}}{kp^{\frac{k}{2}}},
\end{equation*}
the subseries over $k \geq 3$ is absolutely convergent as $x \to \infty$. On the other hand, the subseries over~$k = 2$ satisfies by Euler's theorem
\begin{equation}\label{Euler}
\sum_{p \leq x} \frac{\chi_{-4}(p)^{2}}{2p}
 = \sum_{p \leq x} \frac{1}{2p}
 = \frac{1}{2} \log \log x+c+o(1)
\end{equation}
for some $c \in \mathbb{R}$. By~\eqref{conv} and~\eqref{Euler}, the behavior corresponding to $k = 1$ becomes
\begin{equation*}
\sum_{p \leq x} \frac{\chi_{-4}(p)}{\sqrt{p}}
 = -\frac{1}{2} \log \log x+\log L-c+o(1).
\end{equation*}

We now formulate the Chebyshev biases for prime ideals $\mathfrak{p}$ of a global field $K$.
\begin{definition}[Aoki--Koyama~\cite{AokiKoyama2023}]\label{definition-1}
Let $c(\mathfrak{p}) \in \mathbb{R}$ traverse a sequence over prime ideals $\mathfrak{p}$ of $K$ such~that
\begin{equation*}
\lim_{x \to \infty} \frac{\#\{\mathfrak{p} \mid c(\mathfrak{p}) > 0, \mathrm{N}(\mathfrak{p}) \leq x \}}
{\#\{\mathfrak{p} \mid c(\mathfrak{p}) < 0, \mathrm{N}(\mathfrak{p}) \leq x \}} = 1.
\end{equation*}
We say that $c(\mathfrak{p})$ has a Chebyshev bias towards being positive if there exists a constant $C > 0$ such that
\begin{equation*}
\sum_{\mathrm{N}(\mathfrak{p}) \leq x} \frac{c(\mathfrak{p})}{\sqrt{\mathrm{N}(\mathfrak{p})}} \sim C \log \log x,
\end{equation*}
where $\mathfrak{p}$ runs through prime ideals of $K$. On the other hand, we say that $c(\mathfrak{p})$ is unbiased if
\begin{equation*}
\sum_{\mathrm{N}(\mathfrak{p}) \leq x} \frac{c(\mathfrak{p})}{\sqrt{\mathrm{N}(\mathfrak{p})}} = O(1).
\end{equation*}
\end{definition}

\begin{definition}[Aoki--Koyama~\cite{AokiKoyama2023}]\label{definition-2}
Assume that the set of all prime ideals $\mathfrak{p}$ of $K$ with $\mathrm{N}(\mathfrak{p}) \leq x$ is a disjoint union $P_{1}(x) \cup P_{2}(x)$ and that their proportion converges to
\begin{equation*}
\delta = \lim_{x \to \infty} \frac{|P_{1}(x)|}{|P_{2}(x)|}.
\end{equation*}
We say that there exists a Chebyshev bias towards $P_{1}$ or a Chebyshev bias against $P_{2}$ if there exists a constant $C > 0$ such that
\begin{equation*}
\sum_{\mathfrak{p} \in P_{1}(x)} \frac{1}{\sqrt{\mathrm{N}(\mathfrak{p})}}
 - \delta \sum_{\mathfrak{p} \in P_{2}(x)} \frac{1}{\sqrt{\mathrm{N}(\mathfrak{p})}}
\sim C \log \log x
\end{equation*}
for some $C > 0$. On the other hand, we say that there exist no biases between $P_{1}$ and $P_{2}$ if
\begin{equation*}
\sum_{\mathfrak{p} \in P_{1}(x)} \frac{1}{\sqrt{\mathrm{N}(\mathfrak{p})}}
 - \delta \sum_{\mathfrak{p} \in P_{2}(x)} \frac{1}{\sqrt{\mathrm{N}(\mathfrak{p})}} = O(1).
\end{equation*}
\end{definition}

Let $L/K$ be a finite Galois extension of global fields. The results of Aoki--Koyama~\cite{AokiKoyama2023} yield versions of Chebyshev biases existing in the prime ideals of $K$. We here consider some salient examples. Let $S$ be the set of all prime ideals in $K$, and let $S_{\sigma} \subset S$ be the subset of unramified primes whose Frobenius element $(\frac{L/K}{\mathfrak{p}})$ is equal to $\sigma \in \mathrm{Gal}(L/K)$.

\begin{example}[{Aoki--Koyama~\cite[Theorem 2.2]{AokiKoyama2023}}]
Let $L/K$ be a finite Galois extension~of global fields, and let $\rho$ traverse a nontrivial irreducible representation of $\mathrm{Gal}(L/K)$. Then~the following statements are equivalent.
\begin{enumerate}[\textrm(i)]
\item[(i)] Conjecture~\ref{CC} holds for $L(s, \rho)$.
\item[(ii)] For any $\sigma \in \mathrm{Gal}(L/K)$, there exist constants $C$ and $c$ dependent on $\sigma$ such that
\begin{equation*}
\sum_{\substack{\mathfrak{p} \in S \\ \mathrm{N}(\mathfrak{p}) \leq x}} \frac{1}{\sqrt{\mathrm{N}(\mathfrak p)}}
-\frac{[L:K]}{|c_{\sigma}|} \sum_{\substack{\mathfrak{p} \in S_{\sigma} \\ \mathrm{N}(\mathfrak{p}) \leq x}} 
\frac{1}{\sqrt{\mathrm{N}(\mathfrak p)}} = C \log \log x+c+o(1),
\end{equation*}
where $c_{\sigma}$ denotes the conjugacy class~of~$\sigma$.
\end{enumerate}
\end{example}

\begin{example}[{Aoki--Koyama~\cite[Example 3.3]{AokiKoyama2023}}]\label{ex:splitting}
Assume that $[L:K] = 2$, and let $\chi$ be the nontrivial character of $\mathrm{Gal}(L/K)$. Then the following statements are equivalent.
\begin{enumerate}[\textrm(i)]
\item Conjecture~\ref{CC} holds for $L(s,\chi)$.
\item There exists a Chebyshev bias against splitting primes, namely there exists a constant $c$ such that
\begin{equation*}
\sum_{\substack{\mathfrak{p} \colon \text{nonsplit} \\ \mathrm{N}(\mathfrak{p}) \leq x}} \frac{1}{\sqrt{\mathrm{N}(\mathfrak{p})}}
 - \sum_{\substack{\mathfrak{p} \colon \text{split} \\ \mathrm{N}(\mathfrak{p}) \leq x}} \frac{1}{\sqrt{\mathrm{N}(\mathfrak{p})}}
 = \left(\frac{1}{2}+m(\chi) \right) \log \log x+c+o(1).
\end{equation*}
\end{enumerate}
\end{example}

\begin{example}[{Aoki--Koyama~\cite[Corollary 3.2]{AokiKoyama2023}}]\label{ex:residues}
Let $q \in \mathbb{N}$. Assume that $L(\frac{1}{2}, \chi) \ne 0$ for any Dirichlet character $\chi$ modulo $q$. Then the following statements are equivalent.
\begin{enumerate}[\textrm(i)]
\item Conjecture \ref{CC} holds for $L(s,\chi)$.
\item There exists a Chebyshev bias against quadratic residues modulo $q$, namely there exists a constant $c$ such that
\begin{equation*}
\pi_{\frac{1}{2}}(x; q, b)-\pi_{\frac{1}{2}}(x; q, a) = \frac{2^{t-1}}{\varphi(q)} \log \log x+c+o(1)
\end{equation*}
for any pair $(a, b)$ of a quadratic residue $a$ and a quadratic nonresidue $b$, and there exist no biases for all other pairs $(a, b)$.
\end{enumerate}
\end{example}

\begin{example}[{Aoki--Koyama~\cite[Corollary 3.5]{AokiKoyama2023}}]\label{ex:principal}
Let $\widetilde{K}$ be the Hilbert class field of~$K$ whose ideal class group is $\mathrm{Cl}_{K} \simeq \mathrm{Gal}(\widetilde{K}/K)$. An ideal class $[\mathfrak a] \in \mathrm{Cl}_K$ corresponds to~$(\frac{\widetilde{K}/K}{\mathfrak{a}}) \in \mathrm{Gal}(\widetilde{K}/K)$. Assume Conjecture~\ref{CC} for $L(s, \chi)$ and that $L(\frac{1}{2}, \chi) \ne 0$ for any character $\chi$ of $\mathrm{Cl}_{K}$. When $|\mathrm{Cl}_K|$ is even, there exists a Chebyshev bias against principal ideals in the whole sequence of prime ideals of $K$, namely there exists a constant $c$ such that
\begin{equation*}
\sum_{\substack{\mathfrak{p} \colon \text{nonprincipal} \\ \mathrm{N}(\mathfrak{p}) \leq x}} 
\frac{1}{\sqrt{\mathrm{N}(\mathfrak{p})}}-(|\mathrm{Cl}_{K}|-1) \sum_{\substack{\mathfrak{p} \colon \text{principal} \\ 
\mathrm{N}(\mathfrak{p}) \leq x}} \frac{1}{\sqrt{\mathrm{N}(\mathfrak{p})}}
=\frac{|\mathrm{Cl}_{K}/\mathrm{Cl}_{K}^{2}|-1}{2} \log \log x+c+o(1).
\end{equation*}
\end{example}

Koyama--Kurokawa~\cite{KoyamaKurokawa2022} exemplified a Chebyshev bias for Ramanujan's $\tau$-function by adapting such a phenomenon to the context of degree $2$ $L$-functions.
\begin{example}[{Koyama--Kurokawa \cite[Theorem~2]{KoyamaKurokawa2022}}]\label{tau}
Assume Conjecture~\ref{CC} for $L(s+\frac{11}{2}, \Delta)$. Then the sequence $\tau(p) p^{-\frac{11}{2}}$ has a Chebyshev bias towards being positive, namely there exists a constant $c$ such that
\begin{equation*}
\sum_{p \leq x} \frac{\tau(p)}{p^{6}} = \frac{1}{2} \log \log x+c+o(1).
\end{equation*}
\end{example}

Example~\ref{tau} implies (under the convergence conjecture) that the Satake parameters $\theta(p) \in [0, \pi]$ has a Chebyshev bias~towards being in $[0, \frac{\pi}{2}]$, where $\tau(p) = 2p^{\frac{11}{2}} \cos(\theta(p))$.

\begin{remark}
Sarnak~\cite{Sarnak2007-2} explored a similar phenomenon under the Generalised Riemann hypothesis and the linear independence over $\mathbb{Q}$ of the imaginary parts of all nontrivial zeros of $L(s, \Delta)$ in the upper half-plane. He points out that the sum
\begin{equation*}
S(x) \coloneqq \sum_{p \leq x} \frac{\tau(p)}{p^{\frac{11}{2}}}
\end{equation*}
has a bias towards being positive in the sense that the mean of the measure $\mu$ defined by
\begin{equation*}
\frac{1}{\log X} \int_{2}^{X} f \left(\frac{\log x}{\sqrt{x}} S(x) \right) \frac{dx}{x} \to \int_{\mathbb{R}} f(x) d\mu(x)
\end{equation*}
for $f \in C_{c}^{\infty}(\mathbb{R})$ equals $1$. He studied the logarithmic derivative of $L(s, \Delta)$ and observed~that the second term in its expansion causes the bias. While this discussion handles the logarithm instead, the argument of Aoki--Koyama~\cite{AokiKoyama2023} is more straightforward thanks to DRH.
\end{remark}

\begin{remark}
We refer to the work of the first two authors~\cite{KanekoKoyama2023} and Okumura~\cite{Okumura2023} for studies of the Chebyshev biases for elliptic curves over function fields and Fermat curves of prime degree, respectively, from the perspective of DRH.
\end{remark}


\providecommand{\bysame}{\leavevmode\hbox to3em{\hrulefill}\thinspace}
\providecommand{\MR}{\relax\ifhmode\unskip\space\fi MR }
\providecommand{\MRhref}[2]{%
  \href{http://www.ams.org/mathscinet-getitem?mr=#1}{#2}
}
\providecommand{\href}[2]{#2}

\end{document}